\newtheorem{theorem}{Theorem}
\newtheorem{lemma}{Lemma}
\newtheorem{rem}{Remark}
\newtheorem{exmp}{Example}
\newtheorem{cor}{Corollary}
\begin{document}

\title{On embeddings of Grassmann graphs in polar Grassmann graphs}
\author{Mark Pankov}
\subjclass[2000]{51A50, 51E24}
\keywords{Grassmann graph, polar Grassmann graph, embedding, semilinear mapping}
\address{Department of Mathematics and Computer Science, 
University of Warmia and Mazury,
S{\l}oneczna 54, Olsztyn, Poland}
\email{pankov@matman.uwm.edu.pl}

\begin{abstract}
We establish that every embedding of a Grassmann graph 
in a polar Grassmann graph can be reduced to an embedding in a Grassmann graph
or to an embedding in the collinearity graph of a polar space.
Also, we consider $3$-embeddings, 
i.e. embeddings preserving all distances not greater than $3$,
of dual polar graphs whose diameter is not less than $3$
in polar Grassmann graphs formed by non-maximal singular subspaces.
Using the same arguments we show that
every such an embedding can be reduced to an embedding in a Grassmann graph. 
\end{abstract}

\maketitle

\section{Introduction}
In this paper Grassmann graphs and polar Grassmann graphs are considered. 
Almost all polar spaces
can be obtained from sesquilinear, quadratic and pseudo-quadratic forms \cite{Tits} and 
the corresponding polar Grassmann graphs consist of totally isotropic or 
totally singular subspaces of these forms.
Grassmann graphs and polar Grassmann graphs are interesting for many reasons, 
see \cite{BCN-book,BC-book,D-book,P-book1,Pasini-book}.
For example, Grassmannians of vector spaces and polar Grassmannians
can be considered as point-line geometries
whose collinearity graphs are Grassmann and polar Grassmann graphs.
Such geometries are called Grassmann spaces and polar Grassmann spaces,
respectively. 
They are closely connected to buildings of classical types \cite{Tits}.

Grassmann graphs are naturally contained in polar Grassmann graphs
formed by non-maximal singular subspaces.
Consider, for example, the set of all singular subspaces of the same non-maximal dimension
contained in a certain maximal singular subspace.
We describe all possible embeddings of Grassmann graphs in polar Grassmann graphs
(Theorem \ref{theorem-main}).
Some of them are reducible to embeddings in Grassmann graphs.
The existence of other embeddings is related to the fact that
the Grassmann space formed by $2$-dimensional subspaces of a $4$-dimensional vector 
space is a polar space.
Our result is in spirit of the description of 
subspaces in polar Grassmann spaces isomorphic to Grassmann spaces \cite{BC,Coop}, 
but we use different arguments which can be applied to embeddings of other graphs,
for example, to dual polar graphs.

We take a dual polar graph whose diameter is not less than $3$
(the rank of the associated polar space is also not less than $3$)
and consider $3$-embeddings 
(embeddings preserving all distances not greater than $3$)
of this graph in a polar Grassmann graph formed by non-maximal singular subspaces.
Using the same arguments we show  that every such an embedding
can be reduced to an embedding in a Grassmann graph. 
Note that almost all dual polar graphs are naturally contained in Grassmann graphs. 
We do not describe all possible embeddings of dual polar graphs in Grassmann graphs here,
perhaps, it will be a topic of other paper.

It was noted above that some embeddings of Grassmann graphs 
in polar Grassmann graphs are reducible to embeddings in Grassmann graphs.
Embeddings of Grassmann graphs in Grassmann graphs 
are investigated in \cite{P-book2}. 
All such isometric embeddings are known and 
can be obtained from semilinear embeddings of special type.
Non-isometric embeddings also exist.  

All isometric embeddings of dual polar graphs in dual polar graphs
are described in \cite{Pankov1}. 
Similar results for polar Grassmann graphs formed 
by non-maximal singular subspaces can be found in \cite{KP}.

\section{Basic objects}

\subsection{Graphs}
We define a {\it graph} as a pair $\Gamma=({\mathcal V}, \sim)$,
where ${\mathcal V}$  is a non-empty (not necessarily finite) set 
whose elements are called {\it vertices}
and $\sim$ is a symmetric relation on ${\mathcal V}$ called {\it adjacency}.
We assume that our graph does not contain loops, i.e.
$v\not\sim v$ for every vertex $v\in {\mathcal V}$.
A {\it clique} is a subset of ${\mathcal V}$, 
where any two distinct elements are adjacent vertices.
A {\it path} is a sequence of vertices $v_{1},\dots,v_{k}$
such that $v_{i}$ and $v_{i+1}$ are adjacent for every $i\in \{1,\dots,k-1\}$.

We also assume that the graph $\Gamma$ is connected, i.e.
any two distinct vertices can be connected by a path.
Following \cite[Section 15.1]{DD} we define 
the {\it distance} $d(v,w)$ between vertices $v,w\in {\mathcal V}$  
as the smallest number $d$ such that there is a path 
$$v=v_{0},v_{1},\dots,v_{d}=w.$$
Then for every path $v_{1},\dots,v_{i}$ we have $i\ge d(v_{1},v_{i})+1$
and this path is a {\it geodesic} if $i=d(v_{1},v_{i})+1$. 
The {\it diameter} of $\Gamma$ is the greatest distance between two vertices.

An {\it embedding} of a graph $\Gamma=({\mathcal V}, \sim)$ 
in a graph $\Gamma'=({\mathcal V}', \sim)$ 
is an injection of ${\mathcal V}$ to ${\mathcal V}'$
transferring adjacent vertices of $\Gamma$ to adjacent vertices of $\Gamma'$
and non-adjacent vertices of $\Gamma$ to non-adjacent vertices of $\Gamma'$.
Every embedding preserves distance $1$ and $2$
and we say that it is an $m$-{\it embedding} if 
every distance not greater than $m\ge 3$ is preserved.
An embedding is said to be {\it isometric} if it preserves
all distances between vertices.
If the graph diameter is not greater than $2$
then every embedding of this graph is isometric.

\subsection{Partial linear spaces}
A {\it partial linear space} is a pair $\Pi=(P,{\mathcal L})$, 
where $P$ is a non-empty set whose elements are called {\it points}
and ${\mathcal L}$ is a family of proper subsets of $P$ called {\it lines}.
Every line contains at least two points, every point belongs to a certain  line
and for any two distinct points there is at most one line containing them.
We say that two  points are {\it collinear} if they are joined by a line.
A {\it subspace} of $\Pi$ is a subset $S\subset P$ such that for 
any two collinear points of $S$ the line joining them is contained in $S$.
A subspace is said to be {\it singular} if any two distinct points of this subspace are collinear.
The {\it collinearity graph} of $\Pi$ is the graph whose vertex set is $P$
and two distinct points are adjacent vertices of the graph if these points are collinear.

Two partial linear spaces $\Pi=(P,{\mathcal L})$ and $\Pi'=(P',{\mathcal L}')$
are {\it isomorphic} if there is a bijection $f:P\to P'$ such that
$f({\mathcal L})={\mathcal L}'$. 

\subsection{Grassmann graphs}
Let $V$ be an $m$-dimensional vector space over a division ring.
For every integer $i\in \{1,\dots,m-1\}$ we denote by ${\mathcal G}_{i}(V)$
the Grassmannian consisting of $i$-dimensional subspaces of $V$.
The {\it Grassmann graph} $\Gamma_{i}(V)$ 
is the graph whose vertex set is ${\mathcal G}_{i}(V)$ and 
two $i$-dimensional subspaces are adjacent vertices of the graph
if their intersection is $(i-1)$-dimensional.
In the case when $i=1,m-1$, any two distinct vertices of $\Gamma_{i}(V)$ are adjacent.
For this reason we will always suppose that $1<i<m-1$.
The annihilator mapping induces an isomorphism between 
$\Gamma_{i}(V)$ and $\Gamma_{m-i}(V^{*})$,
where $V^{*}$ is the dual vector space. 

The graph $\Gamma_{i}(V)$ is connected.
If $X,Y\in {\mathcal G}_{i}(V)$ then the distance $d(X,Y)$ 
in $\Gamma_{i}(V)$ is equal to 
$$i-\dim(X\cap Y)=\dim(X+Y)-i.$$
In particular, the diameter of $\Gamma_{i}(V)$ is equal to
the minimum of $i$ and $m-i$.

Let $S$ and $U$ be incident subspaces of $V$
such that $\dim S<i<\dim U$.
Denote by $[S,U]_{i}$ the set of all $X\in {\mathcal G}_{i}(V)$
satisfying $S\subset X\subset U$.
If $S=0$ or $U=V$ then we will write $\langle U]_{i}$ or $[S\rangle_{i}$, 
respectively.
In the case when $\dim S=i-1$ and $\dim U=i+1$, 
the set $[S,U]_{i}$ is said to be a {\it line}.

The Grassmann graph $\Gamma_{i}(V)$ has precisely the following two types 
of maximal cliques:
\begin{enumerate}
\item[$\bullet$] the {\it stars} $[S\rangle_{i}$, $S\in {\mathcal G}_{i-1}(V)$,
\item[$\bullet$] the {\it tops} $\langle U]_{i}$, $U\in {\mathcal G}_{i+1}(V)$.
\end{enumerate}
The intersection of two distinct maximal cliques of $\Gamma_{i}(V)$
is empty or a one-element set or a line. 
The third possibility is realized only in the case 
when the maximal cliques are of different types
(one of them is a star and the other is a top) and 
the associated $(i-1)$-dimensional and $(i+1)$-dimensional subspaces are incident.

The {\it Grassmann space} ${\mathfrak G}_{i}(V)$
is the partial linear space whose point set is ${\mathcal G}_{i}(V)$
and whose lines are defined above.
The corresponding collinearity graph is the Grassmann graph $\Gamma_{i}(V)$.
If $S$ and $U$ are incident subspaces of $V$
satisfying $\dim S < i-1$ and $\dim U >i+1$ then 
$[S,U]_{i}$ is a subspace of ${\mathfrak G}_{i}(V)$ isomorphic to ${\mathfrak G}_{j}(U/S)$,
where $j=i-\dim S$. 
Subspaces of such type  are called {\it parabolic}.
By \cite{CKS}, every subspace of ${\mathfrak G}_{i}(V)$ 
isomorphic to a certain Grassmann space is parabolic.

\subsection{Polar Grassmann graphs}
Following \cite{BC-book,P-book1,Shult-book,Ueberberg}
we define a {\it polar space} as a partial linear space satisfying the following axioms:
\begin{enumerate}
\item[(P1)] every line contains at least three points,
\item[(P2)] there is no point collinear to all points,
\item[(P3)] for every point and every line
the point is collinear to one or all points of the line,
\item[(P4)] any chain of mutually distinct incident singular subspaces is finite.
\end{enumerate}
If a polar space has a singular subspace containing more than one line
then all maximal singular subspaces are projective spaces
of the same dimension $n\ge 2$ and the number $n+1$ is called the {\it rank} of 
this polar space.
A polar space is of rank $2$ if all maximal singular subspaces are lines;
such polar spaces are called {\it generalized quadrangles}.
All polar spaces of rank $\ge 3$ are known \cite{Tits}.

\begin{exmp}\label{exmp-1}{\rm
We take any non-degenerate sesquilinear reflexive form (alternating, symmetric or hermitian).
If this form is trace-valued and has totally isotropic subspaces of dimension greater than $1$
then it defines a polar space.
The points of this polar space are $1$-dimensional isotropic subspaces and
the lines are defined by $2$-dimensional totally isotropic subspaces.
There is a natural one-to-one correspondence 
between totally isotropic subspaces of the form and singular subspaces of the polar space.
The rank of the polar space is equal to the dimension of maximal isotropic subspaces.
Similarly, some polar spaces can be obtained from quadratic and pseudo-quadratic forms,
but we get new examples of polar spaces only in the case when
such a form is defined on a vector space over a division ring of characteristic $2$.
}\end{exmp}

By \cite{Tits}, every polar space of rank $\ge 4$ can be obtained from 
a sesquilinear or quadratic or pseudo-quadratic form.
See also \cite{Pasini} for a new approach related to
the notion of a generalized pseudo-quadratic form.

\begin{exmp}\label{exmp-2}{\rm
If $m=4$ then the Grassmann space ${\mathfrak G}_{2}(V)$ is a polar space of rank $3$.
This polar space corresponds to the Klein quadric defined on $\wedge^{2}(V)$
(as it was described in the previous example) if $V$ is a vector space over a field.
}\end{exmp}

Let $\Pi=(P, {\mathcal L})$ be a polar space of rank $n$.
For points $p,q\in P$ we write $p\perp q$ if these points are collinear.
Similarly, if $X$ and $Y$ are subsets of $P$ then 
$X\perp Y$ means that every point of $X$ is collinear to every point of $Y$.
The subspace of $\Pi$ spanned by a subset $X\subset P$, i.e.
the minimal subspace containing $X$, is denoted by $\langle X\rangle$.
In the case when $X\perp X$, this subspace is singular.

For every $k\in \{0,1,\dots,n-1\}$ we denote by ${\mathcal G}_{k}(\Pi)$
the polar Grassmannian consisting of $k$-dimensional singular subspaces of $\Pi$.
Then ${\mathcal G}_{0}(\Pi)=P$
and ${\mathcal G}_{n-1}(\Pi)$ is formed by maximal singular subspaces.
The {\it polar Grassmann graph $\Gamma_{k}(\Pi)$}
is the graph whose vertex set is ${\mathcal G}_{k}(\Pi)$.
In the case when  $k\le n-2$, two distinct elements of ${\mathcal G}_{k}(\Pi)$
are adjacent vertices of $\Gamma_{k}(\Pi)$
if there is a $(k+1)$-dimensional singular subspace containing them.
Two distinct maximal singular subspaces are adjacent vertices of $\Gamma_{n-1}(\Pi)$
if their intersection is $(n-2)$-dimensional. 
The graph $\Gamma_{n-1}(\Pi)$ is known as the {\it dual polar graph} of $\Pi$.
The graph $\Gamma_{0}(\Pi)$ is the collinearity graph of $\Pi$.

The graph $\Gamma_{k}(\Pi)$ is connected for every $k$.
If $X,Y\in {\mathcal G}_{n-1}(\Pi)$ then the distance $d(X,Y)$ 
in $\Gamma_{n-1}(\Pi)$ is equal to
$$n-1-\dim(X\cap Y)$$
and the diameter of $\Gamma_{n-1}(\Pi)$ is equal to $n$
(the dimension of the empty set is $-1$).
In the case when $k\le n-2$, the diameter of $\Gamma_{k}(\Pi)$ is equal to $k+2$
and there is the following description of the distance. 

\begin{lemma}[\cite{KP}]\label{lemma-dist}
If $X,Y\in {\mathcal G}_{k}(\Pi)$, $k\le n-2$ and 
the distance between $X$ and $Y$ in $\Gamma_{k}(\Pi)$
is equal to $d$ then one of the following possibilities is realized:
\begin{enumerate}
\item[$\bullet$] $\dim(X\cap Y)=k-d$ and
there exist points $p\in X\setminus Y$ and $q\in Y\setminus X$ such that
$p\perp Y$ and $q\perp X$,
\item[$\bullet$] $d>1$, $\dim(X\cap Y)=k-d+1$,
for every point $p\in X\setminus Y$ there is a point of $Y$ non-collinear to $p$
and for every point $q\in Y\setminus X$ there is a point of $X$ non-collinear to $q$.
\end{enumerate}
If $d=k+2$ then only the second possibility is realized.
\end{lemma}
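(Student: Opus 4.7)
My plan is to induct on $d=d(X,Y)$. The base case $d=1$ is immediate: adjacency in $\Gamma_k(\Pi)$ means $X$ and $Y$ span a common singular $(k+1)$-dimensional subspace $W$, so $\dim(X\cap Y)=k-1=k-d$ and every point of $X$ is collinear with every point of $Y$ inside $W$; hence the first alternative holds.

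For the inductive step I would pick a geodesic $X=X_0,X_1,\dots,X_d=Y$ and apply the inductive hypothesis to $(X_1,Y)$ at distance $d-1$. Using $\dim(X_0\cap X_1)=k-1$ and the Grassmann modular identity, I would show that $\dim(X\cap Y)$ and $\dim(X_1\cap Y)$ differ by at most $1$. Combined with the general lower bound $\dim(X\cap Y)\ge k-d$ (no single move can drop the intersection codimension by more than one) and the observation that $\dim(X\cap Y)\ge k-d+2$ would yield $d(X,Y)\le d-1$ via an explicit construction of a shorter path, this pins $\dim(X\cap Y)\in\{k-d,\,k-d+1\}$.

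The dichotomy between the two alternatives then rests on the existence or non-existence of the perpendicular points, and this is where the polar-space axioms enter. For the direction ``$\dim(X\cap Y)=k-d$ implies such $p,q$ exist'', I would track along the geodesic: at each step the singular $(k+1)$-subspace $\langle X_i,X_{i+1}\rangle$ supplies a point $p_i\in X_i\setminus X_{i+1}$ with $p_i\perp X_{i+1}$, and under the tight intersection condition I would show, by a stepwise argument, that such a choice can be propagated so as to produce $p\in X\setminus Y$ with $p\perp Y$ (symmetrically for $q$). For the converse ``if some $p\in X\setminus Y$ has $p\perp Y$ then $\dim(X\cap Y)=k-d$'': the subspace $\langle p\rangle+Y$ is singular of dimension $k+1$, and inside it I would construct $Z\sim Y$ with $\dim(X\cap Z)=\dim(X\cap Y)+1$; iterating yields a path from $X$ to $Y$ of length $k-\dim(X\cap Y)$, forcing $d=k-\dim(X\cap Y)$.

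The extremal case $d=k+2$ then follows because $\dim(X\cap Y)=k-d=-2$ is impossible, leaving only the second alternative. \emph{Main obstacle}: the delicate part is the propagation step producing the perpendicular point $p\in X$ with $p\perp Y$ from the local perpendicular data along the geodesic; making it rigorous requires, at each stage, using axiom (P3) to guarantee the singular $(k+1)$-subspace carrying the running perpendicular direction—and checking that the intersection pattern along the chosen geodesic is compatible with $\dim(X\cap Y)=k-d$ rather than $k-d+1$.
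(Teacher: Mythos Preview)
The paper does not contain a proof of this lemma: it is stated with the citation \cite{KP} and invoked as a black box, so there is no in-paper argument to compare your proposal against.

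As for the proposal itself, the overall architecture is the standard one and is sound: induction along a geodesic gives $\dim(X\cap Y)\ge k-d$, and a direct construction shows that if $\dim(X\cap Y)\ge k-d+2$ a shorter path exists. The genuine soft spot is exactly the one you flag. In your ``converse'' step you build $Z\subset\langle p,Y\rangle$ adjacent to $Y$ with $\dim(X\cap Z)=\dim(X\cap Y)+1$ and then say ``iterating yields a path of length $k-\dim(X\cap Y)$''. But iteration requires a \emph{new} point $p'\in X\setminus Z$ with $p'\perp Z$, and nothing in your construction guarantees this: $Z$ typically contains points of $Y\setminus X$ to which an arbitrary $p'\in X$ need not be collinear. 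One clean fix is to work instead with the subspace $Y':=X^{\perp}\cap Y$ (which contains $X\cap Y$ and, under the hypothesis that some $q\in Y\setminus X$ satisfies $q\perp X$, strictly contains it); then $\langle X,Y'\rangle$ is singular and one can drop from $X$ to a $k$-subspace through $Y'$ in $k-\dim Y'$ steps, then from there to $Y$ in $k-\dim Y'$ further steps, and a more careful count matching $\dim Y'$ against $\dim(X\cap Y)$ gives the required bound. The point is that the existence of the perpendicular witnesses on \emph{both} sides (your $p$ and $q$) is what makes the short path work, not just one of them; your sketch uses only $p$ and that is why the iteration stalls.
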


For every singular subspace $S$ of dimension less than $k$
we denote by $[S\rangle_{k}$ the set of all elements of ${\mathcal G}_{k}(\Pi)$
containing $S$. 
If $S\in {\mathcal G}_{n-2}(\Pi)$ then the set $[S\rangle_{n-1}$
is called a {\it line} of ${\mathcal G}_{n-1}(\Pi)$. 
Every maximal clique of $\Gamma_{n-1}(\Pi)$ is a line. 

Suppose that $k\le n-2$.
Let $S$ and $U$ be incident singular subspaces such that 
$\dim S <k<\dim U$.
Denote by $[S,U]_{k}$ the set of all $X\in {\mathcal G}_{k}(\Pi)$
satisfying $S\subset X\subset U$.
If $S=\emptyset$ then we write $\langle U]_{k}$.
In the case when $\dim S=k-1$ and $\dim U=k+1$,
the set $[S,U]_{k}$ is called a {\it line} of ${\mathcal G}_{k}(\Pi)$
(this is a line of $\Pi$ if $k=0$).

Every maximal clique of $\Gamma_{0}(\Pi)$ is a maximal singular subspace of $\Pi$. 
The polar Grassmann graph $\Gamma_{k}(\Pi)$, $1\le k \le n-3$
has precisely the following two types of maximal cliques:
\begin{enumerate}
\item[$\bullet$] 
the {\it stars} $[S,U]_{k}$, $S\in {\mathcal G}_{k-1}(\Pi)$
and $U\in {\mathcal G}_{n-1}(\Pi)$,
\item[$\bullet$] the {\it tops} $\langle U]_{k}$, $U\in {\mathcal G}_{k+1}(\Pi)$.
\end{enumerate}
If $k=n-2$ then all stars are lines and every maximal clique of $\Gamma_{n-2}(\Pi)$
is a top. 

The {\it polar Grassmann space} ${\mathfrak G}_{k}(\Pi)$ is
the partial linear space whose point set is ${\mathcal G}_{k}(\Pi)$
and whose lines are defined above. 
The corresponding collinearity graph is the polar Grassmann graph $\Gamma_{k}(\Pi)$.
For every $(k-1)$-dimensional singular subspace $S$ of $\Pi$
the subspace $[S\rangle_{k}$ is a polar space of rank $n-k$.
We denote this polar space by $\Pi_{S}$. 
If $S$ and $U$ are incident singular subspaces of $\Pi$
such that $\dim S<k-1$ and $\dim U > k+1$
then $[S,U]_{k}$ is a subspace of ${\mathfrak G}_{k}(\Pi)$
 isomorphic to a Grassmann space.
Subspaces of such type are called {\it parabolic} \cite{BC,Coop}.

\section{Main result}

Our main result says that there are precisely two types 
of embeddings of Grassmann graphs in polar Grassmann graphs.

\begin{theorem}\label{theorem-main}
If $f$ is an embedding of $\Gamma_{i}(V)$ in $\Gamma_{k}(\Pi)$
then $k\le n-3$ and one of the following possibilities is realized: 
\begin{enumerate}
\item[{\rm (A)}] $k\ge 1$ and there is a singular subspace $U$ of $\Pi$ such that
the image of $f$ is contained in $\langle U]_{k}$,
\item[{\rm (B)}] there is a $(k-1)$-dimensional singular subspace $S$ of $\Pi$
such that the image of $f$ is contained in $[S\rangle_{k}$.
\end{enumerate}
\end{theorem}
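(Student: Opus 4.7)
The strategy would be to analyze how $f$ transports the maximal cliques of $\Gamma_{i}(V)$, and to use the intersection pattern of maximal cliques of $\Gamma_{k}(\Pi)$ (which depends on $k$) to classify the possibilities. Recall that the maximal cliques of $\Gamma_{i}(V)$ are the stars $[S\rangle_{i}$ with $S\in\mathcal{G}_{i-1}(V)$ and the tops $\langle U]_{i}$ with $U\in\mathcal{G}_{i+1}(V)$, and that a star and a top intersect in a line $[S,U]_{i}$ precisely when $S\subset U$. Since $f$ is an embedding, every maximal clique of $\Gamma_{i}(V)$ has image contained in some maximal clique of $\Gamma_{k}(\Pi)$.

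First I would rule out $k=n-1$ and $k=n-2$ in one stroke. In both ranges any two distinct maximal cliques of $\Gamma_{k}(\Pi)$ share at most one vertex: for $k=n-1$ they are lines of the dual polar graph, and for $k=n-2$ they are tops $\langle U']_{n-2}$, which intersect in $\mathcal{G}_{n-2}(U'_{1}\cap U'_{2})$. Fix a star $\mathcal{S}$ and a top $\mathcal{T}$ of $\Gamma_{i}(V)$ with $S\subset U$, so that $\mathcal{S}\cap\mathcal{T}$ is a line containing at least two vertices. If $f(\mathcal{S})$ and $f(\mathcal{T})$ lie in maximal cliques $\mathcal{M}_{1}$ and $\mathcal{M}_{2}$, then $\mathcal{M}_{1}\cap\mathcal{M}_{2}$ contains at least two points, forcing $\mathcal{M}_{1}=\mathcal{M}_{2}$ and making $f(\mathcal{S}\cup\mathcal{T})$ a clique. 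However $\mathcal{S}\cup\mathcal{T}$ contains non-adjacent pairs: any $X\in\mathcal{S}$ with $X\cap U=S$ together with any $X'\in\mathcal{T}$ such that $S\not\subset X'$ satisfies $X\cap X'\subsetneq S$, so $X$ and $X'$ are at distance at least two in $\Gamma_{i}(V)$. This contradicts that $f$ is an embedding.

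For $k\le n-3$, the maximal cliques of $\Gamma_{k}(\Pi)$ come in two types, and the same star-top pair $(\mathcal{S},\mathcal{T})$ of $\Gamma_{i}(V)$ now forces the containing maximal cliques $\mathcal{M}(\mathcal{S})$ and $\mathcal{M}(\mathcal{T})$ to share a line in $\Gamma_{k}(\Pi)$. Since two distinct tops of $\Gamma_{k}(\Pi)$ share at most one vertex, the pair $(\mathrm{top},\mathrm{top})$ is forbidden. The only remaining possibilities are: both $\mathcal{M}(\mathcal{S})$ and $\mathcal{M}(\mathcal{T})$ are stars with a common $(k-1)$-dimensional part $S'$ (whose $(n-1)$-dimensional parts meet in dimension $k+1$); or one is a star $[S',U']_{k}$ and the other is a top $\langle U'']_{k}$ with $U''\subset U'$ and $S'\subset U''$. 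In the first case $f(\mathcal{S}\cup\mathcal{T})\subset [S'\rangle_{k}$; in the second $f(\mathcal{S}\cup\mathcal{T})\subset\langle U']_{k}$ for the maximal singular $U'$.

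The remaining and main part of the argument is the globalization: showing that one fixed $S$ (respectively, one fixed singular $U$) works for the entire image. I would propagate the local invariant along the connected net of line-sharing star-top pairs in $\Gamma_{i}(V)$. In the first case, given two stars $\mathcal{S}_{1},\mathcal{S}_{2}$ that both share a line with a common top $\mathcal{T}$, the cliques $\mathcal{M}(\mathcal{S}_{j})$ each share a line with $\mathcal{M}(\mathcal{T})$, hence each has the same $(k-1)$-dimensional part as $\mathcal{M}(\mathcal{T})$; connectivity of $\Gamma_{i}(V)$ then gives a single global $S$, yielding (B). In the second case the maximal singular $U$ is pinned down similarly, giving (A); the condition $k\ge 1$ is automatic since a star $[S',U']_{k}$ requires a $(k-1)$-dimensional singular subspace. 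The main obstacle is precisely this propagation step: one must verify that the local type (star-star or star-top) and the corresponding invariant do not flip along any chain of line-sharing maximal cliques in $\Gamma_{i}(V)$, which is handled by the sharp intersection dichotomy for $\Gamma_{k}(\Pi)$ established in the classification step.
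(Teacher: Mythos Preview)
Your clique-based strategy is genuinely different from the paper's, which hinges instead on the relation $f(A)\perp f(B)$ for vertices at distance $2$: either this holds for every such pair, and a short induction on $d(A,B)$ yields type (A); or it fails for one pair, and an apartment argument inside the $4$-dimensional subspace $A+B$, propagated along $\Gamma_{i+2}(V)$, forces type (B). Your treatment of $k\in\{n-2,n-1\}$ matches the paper's preliminary step.

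The difficulty is exactly where you say it is, and it is not resolved by what you have written. The intersection dichotomy you establish is purely local: for each incident star--top pair $(\mathcal{S},\mathcal{T})$ the containing maximal cliques of $\Gamma_{k}(\Pi)$ are either two stars with a common $(k-1)$-part $S'$, or a star and a top. Nothing you have said prevents the two local types from coexisting. Concretely, if $\mathcal{M}(\mathcal{S})=[S',U']_{k}$ and one top $\mathcal{T}_{1}$ through $\mathcal{S}$ has $\mathcal{M}(\mathcal{T}_{1})=[S',U'_{1}]_{k}$ while another top $\mathcal{T}_{2}$ through $\mathcal{S}$ has $\mathcal{M}(\mathcal{T}_{2})=\langle W]_{k}$ with $S'\subset W\subset U'$, your dichotomy permits both, yet $f(\mathcal{T}_{2})\not\subset[S'\rangle_{k}$ in general, so the invariant $S'$ does not propagate. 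Ruling out this mixing is precisely the substance of the paper's type (B) argument: it shows that if the restriction of $f$ to some $\langle N]_{i}$ adjacent to $\langle M]_{i}$ were of type (A), then Lemma~\ref{lemma-cliques} together with Lemma~\ref{lemma-line} would force the image of a maximal clique into a line of ${\mathcal G}_{k}(\Pi)$, a contradiction. Your outline has no analogue of this step, and the phrase ``handled by the sharp intersection dichotomy'' is not an argument. A smaller but related gap: you treat $\mathcal{M}(\mathcal{S})$ as a well-defined object, but the containing maximal clique of $\Gamma_{k}(\Pi)$ is unique only once you know $f(\mathcal{S})$ is not contained in a line of ${\mathcal G}_{k}(\Pi)$; this is Lemma~\ref{lemma-line} in the paper and needs its own proof.
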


Suppose that $f$ is an embedding of  type (A), i.e.
$k\ge 1$ and the image of $f$ is contained in $\langle U]_{k}$,
where $U$ is a singular subspace of $\Pi$.
By Theorem \ref{theorem-main}, $n\ge 4$ which implies that $\Pi$ is defined by a form
(sesquilinear, quadratic or pseudo-quadric). 
Then $U$ is the projective space associated to a certain vector space $W$.
The restriction of the graph $\Gamma_{k}(\Pi)$ to $\langle U]_{k}$
can be naturally identified with the Grassmann graph $\Gamma_{k+1}(W)$
and $f$ is an embedding of $\Gamma_{i}(V)$ in $\Gamma_{k+1}(W)$.

\begin{rem}{\rm
Embeddings of Grassmann graphs in Grassmann graphs
are investigated in \cite[Chapter 3]{P-book2}. 
All isometric embeddings are known and can be obtained from
semilinear embeddings of special type \cite[Theorem 3.1]{P-book2}.
These semilinear embeddings are not necessarily strong
(a strong semilinear embedding transfers any collection of linearly independent vectors 
to linearly independent vectors). 
Also, some non-strong semilinear embeddings
define non-isometric embeddings of Grassmann graphs.
The classical Chow theorem concerning automorphisms of Grassmann graphs 
\cite{Chow} easily follows from the mentioned above description 
of isometric embeddings. 
Interesting results related to Chow's theorem
can be found in \cite{HVM, Havlicek, HLW, Huang, Kreuzer, Lim}.
}\end{rem}

\begin{exmp}{\rm
If $m=4$ then the Grassmann space ${\mathfrak G}_{2}(V)$
is a polar space and every collinearity preserving injection of this polar space 
to the polar space $\Pi_{S}$, $S\in {\mathcal G}_{k-1}(\Pi)$
is an embedding of $\Gamma_{2}(V)$ in $\Gamma_{k}(\Pi)$.
By \cite[Examples 1.18 and Proposition 3.4]{P-book2}, 
there is a semilinear embedding $l:W\to W'$
such that $\dim W>\dim W'=4$ and 
$l$ induces an embedding of $\Gamma_{2}(W)$ in $\Gamma_{2}(W')$.
Therefore, embeddings of $\Gamma_{2}(V)$ in $\Gamma_{k}(\Pi)$ of type (B)
exist in the case when $m>4$.
}\end{exmp}

The distance between two elements of $[S\rangle_{k}$, $S\in {\mathcal G}_{k-1}(\Pi)$ 
in the graph $\Gamma_{k}(\Pi)$ is not greater than $2$.
Thus an isometric embedding of $\Gamma_{i}(V)$ in $\Gamma_{k}(\Pi)$
can be of type (B) only in the case when the diameter of 
the Garassmann graph $\Gamma_{i}(V)$ is equal to $2$,
i.e. $i=2,m-2$. 
So, if $3\le i\le m-3$ then every isometric embedding of $\Gamma_{i}(V)$ in $\Gamma_{k}(\Pi)$
is of type (A) and, by \cite[Theorem 3.1]{P-book2}, 
it is induced by a semilinear embedding.

Recall that every subspace of ${\mathfrak G}_{i}(V)$ 
isomorphic to a Grassmann space is parabolic \cite{CKS}.
Using this fact and Theorem \ref{theorem-main}, we prove the following.

\begin{cor}
If ${\mathcal S}$ is a subspace of ${\mathfrak G}_{k}(\Pi)$ isomorphic to ${\mathfrak G}_{i}(V)$
then one of the following possibilities is realized: 
\begin{enumerate}
\item[$\bullet$]  ${\mathcal S}$ is a parabolic subspace,
\item[$\bullet$]  ${\mathcal S}$ is contained in $[S\rangle_{k}$, $S\in {\mathcal G}_{k-1}(\Pi)$
and $m=4$, $i=2$.
\end{enumerate}
\end{cor}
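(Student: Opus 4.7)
The plan is to turn the subspace isomorphism $\mathcal{S} \cong \mathfrak{G}_{i}(V)$ into a graph embedding and invoke Theorem~\ref{theorem-main}. Since $\mathcal{S}$ is a subspace of $\mathfrak{G}_{k}(\Pi)$, its lines are exactly the lines of $\mathfrak{G}_{k}(\Pi)$ contained in $\mathcal{S}$, so two points of $\mathcal{S}$ are collinear in $\mathcal{S}$ if and only if they are adjacent in $\Gamma_{k}(\Pi)$. The given isomorphism is therefore an embedding of $\Gamma_{i}(V)$ in $\Gamma_{k}(\Pi)$, and Theorem~\ref{theorem-main} splits the analysis into two cases.

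Suppose Case~(A) holds, so $\mathcal{S} \subseteq \langle U ]_{k}$ for some singular subspace $U$ of $\Pi$. The set $\langle U ]_{k}$ is itself a subspace of $\mathfrak{G}_{k}(\Pi)$, for if $X,Y\in \langle U ]_{k}$ are collinear then the line joining them is $[X\cap Y,\,X+Y]_{k}$ and $X+Y\subseteq U$. Identifying $\langle U ]_{k}$ with the Grassmann space $\mathfrak{G}_{k+1}(W)$ of the vector space $W$ underlying $U$, we see that $\mathcal{S}$ is a subspace of a Grassmann space isomorphic to another Grassmann space. By the result of \cite{CKS}, $\mathcal{S}$ is parabolic in $\langle U ]_{k}$, that is, it has the form $[S',W']_{k+1}$ with $\dim S'<k$ and $\dim W'>k+2$. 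Translating back to $\Pi$ gives $\mathcal{S}=[S'',U'']_{k}$ for incident singular subspaces satisfying $\dim S''<k-1$ and $\dim U''>k+1$, so $\mathcal{S}$ is a parabolic subspace of $\mathfrak{G}_{k}(\Pi)$.

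Suppose Case~(B) holds, so $\mathcal{S}\subseteq [S\rangle_{k}$ for some $(k-1)$-dimensional singular subspace $S$. Any line $[S'',U'']_{k}$ of $\mathfrak{G}_{k}(\Pi)$ lying in $[S\rangle_{k}$ satisfies $S\subseteq S''$, hence $S=S''$ by a dimension count, so the lines of $\mathfrak{G}_{k}(\Pi)$ inside $[S\rangle_{k}$ coincide with the lines of the polar space $\Pi_{S}$; consequently $\mathcal{S}$ is a subspace of $\Pi_{S}$. Axiom~(P3) is inherited by any subspace of a polar space, so the isomorphic copy $\mathfrak{G}_{i}(V)$ satisfies (P3). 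The main obstacle is then to show that this forces $(m,i)=(4,2)$. Using the annihilator duality I may assume $2i\leq m$. Given any line $L=[S_{0},U_{0}]_{i}$ of $\mathfrak{G}_{i}(V)$, I would exhibit an $X\in \mathcal{G}_{i}(V)$ collinear to no point of $L$: when $m\geq 2i+1$ take $X$ disjoint from $U_{0}$; when $m=2i$ with $i\geq 3$ take $X$ so that $X\cap U_{0}$ is one-dimensional and not contained in $S_{0}$, for which a short computation gives $\dim(X\cap Y)\leq 1<i-1$ for every $Y\in L$. Either sub-case contradicts (P3), so only $(m,i)=(4,2)$ remains, which is the Klein quadric situation of Example~\ref{exmp-2}.
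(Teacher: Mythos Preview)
Your proof is correct and follows the same route as the paper: turn the isomorphism into a graph embedding, apply Theorem~\ref{theorem-main}, invoke \cite{CKS} in case~(A), and use the polar structure of $[S\rangle_{k}$ in case~(B). The only difference is cosmetic: where the paper simply asserts that ${\mathcal S}$ being a subspace of the polar space $[S\rangle_{k}$ forces $m=4$, $i=2$, you supply the explicit verification via axiom~(P3), which is a welcome addition but not a different idea.
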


\begin{proof}
Every isomorphism of ${\mathfrak G}_{i}(V)$ to ${\mathcal S}$
is an embedding of $\Gamma_{i}(V)$ in $\Gamma_{k}(\Pi)$.
If this embedding is of type (A) then 
${\mathcal S}$ is contained in a parabolic subspace of ${\mathfrak G}_{k}(\Pi)$.
Every parabolic subspace of ${\mathfrak G}_{k}(\Pi)$ is naturally isomorphic 
to a Grassmann space
and the above mentioned result from \cite{CKS}
guarantees that ${\mathcal S}$ is parabolic.
If the embedding is of type (B) then
${\mathcal S}$ is contained in $[S\rangle_{k}$, $S\in {\mathcal G}_{k-1}(\Pi)$.
The subspace $[S\rangle_{k}$ is a polar space and the same holds for ${\mathcal S}$ 
which is possible only in the case when $m=4$ and $i=2$.
\end{proof}

\begin{rem}{\rm
Similar results for the polar spaces defined by sesquilinear forms 
were obtained in \cite{BC,Coop}.
Also, \cite{BC} describes all cases when the second possibility is realized.
}\end{rem}

\begin{rem}{\rm 
Note that ${\mathcal S}$ can be a proper subspace of 
$[S\rangle_{k}$, $S\in {\mathcal G}_{k-1}(\Pi)$
even if $[S\rangle_{k}$ is a polar space of rank $3$.
Consider the Klein quadric defined on a $6$-dimensional vector space over a field of characteristic $2$.
The associated sesquilinear form is alternating.
This form is also non-degenerate and trace-valued.
The corresponding polar space contains the polar space defined by the Klein quadric.  
}\end{rem}

\begin{rem}{\rm
Now we consider the case when ${\mathcal S}$ is contained in the polar space
$[S\rangle_{k}$, $S\in {\mathcal G}_{k-1}(\Pi)$ whose rank is greater than $3$. 
We take any frame ${\mathcal F}$ of the polar space ${\mathcal S}$
and extend it to a frame ${\mathcal F}'$ of the polar space $[S\rangle_{k}$
(every frame of a rank $n$ polar space is formed by 
$2n$ distinct points $p_{1},\dots,p_{2n}$ such that 
for each $p_{i}$ there is unique $p_{j}$ satisfying $p_{i}\not\perp p_{j}$).
Since ${\mathcal S}$ is isomorphic to a Grassmann space,
${\mathcal F}$ corresponds to an apartment in the associated Grassmannian
and ${\mathcal S}$ is spanned by ${\mathcal F}$
(see \cite[Subsections 3.3.1 and 4.3.3]{P-book1} for the details).
This implies that ${\mathcal S}$ is contained in the rank $3$ polar space
formed by all points of $[S\rangle_{k}$ collinear 
to every point of ${\mathcal F}'\setminus {\mathcal F}$.
By the previous remark, 
${\mathcal S}$ can be a proper subspace of this polar space.
}\end{rem}

\section{Proof of Theorem \ref{theorem-main}} 

\subsection{Preliminary observations}
Let $f$ be an embedding of $\Gamma_{i}(V)$ in $\Gamma_{k}(\Pi)$.
Then $f$ transfers maximal cliques of $\Gamma_{i}(V)$ to not necessarily maximal cliques
of $\Gamma_{k}(\Pi)$, i.e. subsets of maximal cliques.
If ${\mathcal X}$ and ${\mathcal Y}$ are distinct maximal cliques of $\Gamma_{i}(V)$
then there exist $X\in {\mathcal X}$ and $Y\in {\mathcal Y}$ 
which are not adjacent in $\Gamma_{i}(V)$.
Since $f(X)$ and $f(Y)$ are non-adjacent vertices of $\Gamma_{k}(\Pi)$,
there is no maximal clique of $\Gamma_{k}(\Pi)$ containing 
$f({\mathcal X})$ and $f({\mathcal Y})$.
So, distinct maximal cliques go to subsets of distinct maximal cliques.

If ${\mathcal X}$ is a star of ${\mathcal G}_{i}(V)$
and ${\mathcal Y}$ is a top of ${\mathcal G}_{i}(V)$
such that the associated $(i-1)$-dimensional and $(i+1)$-dimensional 
subspaces of $V$ are incident then ${\mathcal X}\cap {\mathcal Y}$ is a line of 
${\mathcal G}_{i}(V)$.
This means that $f({\mathcal X})\cap f({\mathcal Y})$ contains more than one 
element and the same holds for the intersection of maximal cliques of $\Gamma_{k}(\Pi)$
containing $f({\mathcal X})$ and $f({\mathcal Y})$.
Therefore, $k\le n-3$. 
Indeed, if $k$ is equal to $n-2$ or $n-1$ then all maximal cliques of 
$\Gamma_{k}(\Pi)$ are of the same type
and the intersection of two distinct maximal cliques contains at most one element. 

\begin{lemma}\label{lemma-line}
If ${\mathcal X}$ is a maximal clique of $\Gamma_{i}(V)$
then $f({\mathcal X})$ is not contained in a line of ${\mathcal G}_{k}(\Pi)$.
\end{lemma}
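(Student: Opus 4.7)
The plan is to argue by contradiction: assume $f(\mathcal{X})\subset L$ for some line $L=[S',U']_{k}$ of $\mathcal{G}_{k}(\Pi)$ and produce two non-adjacent vertices of $\Gamma_{i}(V)$ whose images are forced into a common maximal clique of $\Gamma_{k}(\Pi)$.

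Since $\mathcal{X}$ is either a star $[S\rangle_{i}$ or a top $\langle U]_{i}$, and as a subspace of $\mathfrak{G}_{i}(V)$ it carries the structure of a projective space of dimension $m-i\ge 2$ (star case) or $i\ge 2$ (top case), I can choose three pairwise distinct elements $X_{1},X_{2},X_{3}\in \mathcal{X}$ that do not lie on a common line of $\mathfrak{G}_{i}(V)$. Let $\ell\subset \mathcal{X}$ be the line of $\mathfrak{G}_{i}(V)$ joining $X_{1}$ and $X_{2}$, and let $\mathcal{Y}$ be the unique maximal clique of $\Gamma_{i}(V)$ of opposite type (top if $\mathcal{X}$ is a star, and star otherwise) satisfying $\mathcal{X}\cap \mathcal{Y}=\ell$.

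Pick a maximal clique $\mathcal{M}$ of $\Gamma_{k}(\Pi)$ containing $f(\mathcal{Y})$. From
$$f(\ell)=f(\mathcal{X}\cap \mathcal{Y})\subset f(\mathcal{X})\cap f(\mathcal{Y})\subset L\cap \mathcal{M}$$
and $|f(\ell)|=|\ell|\ge 3$, the maximal clique $\mathcal{M}$ contains at least two distinct points of the line $L$. Since any two adjacent vertices of $\Gamma_{k}(\Pi)$ determine a unique line of $\mathcal{G}_{k}(\Pi)$ that is contained in every maximal clique through them (a routine check for stars, tops, and maximal singular subspaces), this forces $L\subset \mathcal{M}$; in particular $f(X_{3})\in \mathcal{M}$.

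It remains to locate $Z\in \mathcal{Y}\setminus \ell$ with $X_{3}\not\sim Z$ in $\Gamma_{i}(V)$. In the star case $\mathcal{Y}=\langle X_{1}+X_{2}]_{i}$, take $Z$ to be any $i$-dimensional subspace of $X_{1}+X_{2}$ not containing $S$; since $X_{3}\not\subset X_{1}+X_{2}$ a dimension count gives $X_{3}\cap(X_{1}+X_{2})=S$, hence $X_{3}\cap Z\subset S\cap Z$ has dimension at most $i-2$. The top case is dual: with $\mathcal{Y}=[X_{1}\cap X_{2}\rangle_{i}$, take $Z\supset X_{1}\cap X_{2}$ of dimension $i$ with $Z\not\subset U$; then $Z\cap U=X_{1}\cap X_{2}$ and $X_{3}\cap Z=X_{1}\cap X_{2}\cap X_{3}$ has dimension at most $i-2$ because the $X_{j}$ are not collinear in the top. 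Either way $X_{3}\not\sim Z$, so $f(X_{3})\not\sim f(Z)$; but $f(X_{3}),f(Z)\in \mathcal{M}$ forces adjacency, a contradiction. The main obstacle is selecting the witness $Z$ and verifying the dimension estimate certifying $X_{3}\not\sim Z$; once the star/top duality places $L$ inside $\mathcal{M}$, a single dimension count finishes the argument.
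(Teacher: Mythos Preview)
Your proof is correct and follows essentially the same approach as the paper: pick three non-collinear elements $X_{1},X_{2},X_{3}\in\mathcal{X}$, pass to the unique opposite-type maximal clique $\mathcal{Y}$ meeting $\mathcal{X}$ in the line through $X_{1},X_{2}$, and exploit an element of $\mathcal{Y}\setminus\mathcal{X}$ that is non-adjacent to $X_{3}$. The only differences are cosmetic: the paper argues directly (showing $f(X_{3})$ cannot lie on the line through $f(X_{1}),f(X_{2})$ because any $f(D)$ with $D\in\mathcal{Y}\setminus\mathcal{X}$ is adjacent to every point of that line yet non-adjacent to $f(X_{3})$), whereas you phrase it by contradiction via $L\subset\mathcal{M}$, and you spell out the dimension counts for $X_{3}\not\sim Z$ that the paper leaves implicit.
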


\begin{proof}
We take three distinct elements $A,B,C\in {\mathcal X}$
which are not on a common line of ${\mathcal G}_{i}(V)$. 
There is the unique maximal clique ${\mathcal Y}$
intersecting ${\mathcal X}$ precisely in the line containing $A$ and $B$.
Every  $D\in {\mathcal Y}\setminus {\mathcal X}$ 
is a vertex of $\Gamma_{i}(V)$ adjacent to $A,B$  and non-adjacent to $C$.
Then $f(D)$ is a vertex of $\Gamma_{k}(\Pi)$
adjacent to $f(A),f(B)$ and non-adjacent to $f(C)$.
An easy verification shows that $f(D)$ is adjacent to all vertices of $\Gamma_{k}(\Pi)$
belonging to the line joining $f(A)$ and $f(B)$. 
Hence $f(C)$ is not on this line.
\end{proof}

Lemma \ref{lemma-line} shows that 
the image of a maximal clique cannot be contained in two maximal cliques of different types. 
Indeed, if the intersection of a star and a top of ${\mathcal G}_{k}(\Pi)$ 
contains  more than one element then this intersection is a line.

\begin{lemma}\label{lemma-cliques}
If there exists a maximal singular subspace $U$ of $\Pi$ such that
the image of $f$ is contained in $\langle U]_{k}$
then one of the following possibilities is realized:
\begin{enumerate}
\item[$\bullet$] 
$f$ transfers stars of ${\mathcal G}_{i}(V)$ to subsets in stars of ${\mathcal G}_{k}(\Pi)$
and tops of ${\mathcal G}_{i}(V)$ to  subsets in tops of ${\mathcal G}_{k}(\Pi)$,
\item[$\bullet$] 
$f$ transfers stars of ${\mathcal G}_{i}(V)$ to subsets in tops of ${\mathcal G}_{k}(\Pi)$
and tops of ${\mathcal G}_{i}(V)$ to subsets in stars of ${\mathcal G}_{k}(\Pi)$.
\end{enumerate}
\end{lemma}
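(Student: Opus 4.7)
The plan is to reduce the problem to an embedding between Grassmann graphs. Using the identification mentioned after Theorem \ref{theorem-main}, let $W$ be the $n$-dimensional vector space for which $U$ is the associated projective space. The restriction of $\Gamma_{k}(\Pi)$ to $\langle U]_{k}$ is isomorphic to the Grassmann graph $\Gamma_{k+1}(W)$, so that $f$ becomes an embedding of $\Gamma_{i}(V)$ in $\Gamma_{k+1}(W)$. Under this identification, the stars $[S,U]_{k}$ of $\Gamma_{k}(\Pi)$ with $S\subset U$ of dimension $k-1$ correspond to stars of $\Gamma_{k+1}(W)$, and the tops $\langle Y]_{k}$ with $Y\subset U$ of dimension $k+1$ correspond to tops of $\Gamma_{k+1}(W)$; hence the star/top dichotomies agree on the image.

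Next I would assign to each maximal clique ${\mathcal X}$ of $\Gamma_{i}(V)$ a type $\tau({\mathcal X})\in\{\textrm{star},\textrm{top}\}$ equal to the type of the unique maximal clique of $\Gamma_{k+1}(W)$ that contains $f({\mathcal X})$. Uniqueness relies on the description of maximal cliques of Grassmann graphs recalled in the paper: two distinct maximal cliques intersect in at most a line, and if $f({\mathcal X})$ were contained in such a line it would lie in a line of ${\mathcal G}_{k}(\Pi)$ via the identification, contradicting Lemma \ref{lemma-line}.

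The crucial observation is then that, if ${\mathcal X}$ is a star and ${\mathcal Y}$ is a top of ${\mathcal G}_{i}(V)$ whose associated $(i-1)$- and $(i+1)$-dimensional subspaces are incident, then $\tau({\mathcal X})\neq \tau({\mathcal Y})$. Indeed ${\mathcal X}\cap {\mathcal Y}$ is a line of ${\mathcal G}_{i}(V)$, so $f({\mathcal X})\cap f({\mathcal Y})$ has more than one element, which forces the two containing maximal cliques of $\Gamma_{k+1}(W)$ to be of distinct types. A connectivity argument then globalises this. Whenever $S_{1},S_{2}\in {\mathcal G}_{i-1}(V)$ are adjacent in $\Gamma_{i-1}(V)$, the sum $S_{1}+S_{2}$ is $i$-dimensional and hence contained in some $Y\in {\mathcal G}_{i+1}(V)$, so the top $\langle Y]_{i}$ is incident with both stars ${\mathcal X}_{S_{1}},{\mathcal X}_{S_{2}}$; by the crucial observation the two stars share the same value of $\tau$. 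Since $\Gamma_{i-1}(V)$ is connected, this propagates along paths and $\tau$ is constant on the set of all stars. The symmetric argument, using connectedness of $\Gamma_{i+1}(V)$, shows that $\tau$ is constant on tops, and together with the crucial observation this gives exactly the dichotomy claimed in the lemma.

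The main obstacle, I expect, is establishing the well-definedness of $\tau$: it requires transferring Lemma \ref{lemma-line} across the identification with $\Gamma_{k+1}(W)$ and invoking the precise description of the intersection of two distinct maximal cliques of a Grassmann graph. Once that is in place, the remaining connectivity reasoning is routine.
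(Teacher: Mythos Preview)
Your proposal is correct and begins exactly as the paper does: identify $\langle U]_{k}$ with the Grassmann graph $\Gamma_{k+1}(W)$ and note that stars and tops of ${\mathcal G}_{k}(\Pi)$ contained in $\langle U]_{k}$ are precisely the stars and tops of this Grassmann graph. At that point the paper simply invokes \cite[Proposition 3.5]{P-book2}, which is the dichotomy for embeddings between Grassmann graphs, whereas you supply a self-contained proof of that proposition. Your argument --- assigning a well-defined type $\tau$ to each maximal clique via Lemma \ref{lemma-line}, observing that incident star/top pairs receive opposite types because their images overlap in more than one point, and then propagating constancy of $\tau$ along the connected graphs $\Gamma_{i-1}(V)$ and $\Gamma_{i+1}(V)$ --- is exactly the standard proof of that proposition and is carried out correctly. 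The only gain from the paper's route is brevity; your route has the advantage of being independent of the reference.
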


\begin{proof}
The restriction of the graph $\Gamma_{k}(\Pi)$ to $\langle U]_{k}$,
is naturally identified with a Grassmann graph.
Its maximal cliques are stars and tops of ${\mathcal G}_{k}(\Pi)$
contained in $\langle U]_{k}$. 
The required statement follows from \cite[Proposition 3.5]{P-book2}.
\end{proof}

Let $X$ and $Y$ be elements of ${\mathcal G}_{k}(\Pi)$
such that the distance between $X$ and $Y$ in $\Gamma_{k}(\Pi)$ is equal to $2$.
Since $k\le n-3$, Lemma \ref{lemma-dist} shows that 
one of the following possibilities is realized:
\begin{enumerate}
\item[(1)] $X\cap Y$ is $(k-2)$-dimensional and $X\perp Y$;
\item[(2)] $X\cap Y$ is $(k-2)$-dimensional and $X\not\perp Y$,
but there exist points $p\in X\setminus Y$ and $q\in Y\setminus X$ 
such that $p\perp Y$ and $q\perp X$;
\item[(3)] $X\cap Y$ is $(k-1)$-dimensional and $X\not\perp Y$.
\end{enumerate}

\begin{lemma}\label{lemma-case2}
In the case {\rm (2)}, there is a unique vertex of $\Gamma_{k}(\Pi)$
adjacent to both $X$ and $Y$, i.e. there is only one geodesic joining $X$ and $Y$.
\end{lemma}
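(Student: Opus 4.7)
The plan is to pin down every common neighbor $Z$ of $X$ and $Y$ as a single $k$-dimensional singular subspace depending only on $X$ and $Y$, namely
\[
Z_{0} := (X \cap Y^{\perp}) + (Y \cap X^{\perp}),
\]
which proves uniqueness (existence of at least one such $Z$ is given by $d(X,Y)=2$).

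Let $T := X \cap Y$, a $(k-2)$-dimensional singular subspace, and suppose $Z\in {\mathcal G}_{k}(\Pi)$ is adjacent to both $X$ and $Y$. Then $\dim(Z \cap X) = \dim(Z \cap Y) = k-1$, and both $Z + X$ and $Z + Y$ are singular of dimension $k+1$. First I would show $T \subset Z$: the subspaces $Z \cap X$ and $Z \cap Y$ are two $(k-1)$-dimensional subspaces of the $k$-dimensional $Z$; if they coincided, they would be contained in $X \cap Y = T$, which is impossible by dimension. Hence they are distinct hyperplanes of $Z$, their sum equals $Z$, and their intersection $Z \cap X \cap Y$ has dimension $k-2$; since this intersection is contained in the $(k-2)$-dimensional $T$, it equals $T$, giving $T \subset Z$.

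Next, singularity of $Z + X$ and $Z + Y$ yields $Z \subset X^{\perp} \cap Y^{\perp}$, so $Z \cap X \subset X \cap Y^{\perp}$. The key step is to identify $X \cap Y^{\perp}$ exactly: it is a singular subspace of $X$ (using the standard polar-space fact that $Y^{\perp}$ is a subspace of $\Pi$), it strictly contains $T$ because of the point $p$ supplied in case~(2), and it is a proper subset of $X$ because $X \not\perp Y$. Hence $X \cap Y^{\perp}$ has dimension exactly $k-1$, and the inclusion $Z \cap X \subset X \cap Y^{\perp}$ is forced to be an equality. Symmetrically, $Z \cap Y = Y \cap X^{\perp}$.

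Finally, $(X \cap Y^{\perp}) \cap (Y \cap X^{\perp})$ lies in $X \cap Y = T$, so the Grassmann formula gives $\dim\bigl((X \cap Y^{\perp}) + (Y \cap X^{\perp})\bigr) \geq 2(k-1) - (k-2) = k$; being contained in the $k$-dimensional $Z$, this sum equals $Z$. Thus $Z = Z_{0}$, independently of the choice of $Z$. The only step that goes beyond dimension bookkeeping is the identification of $X \cap Y^{\perp}$ as a hyperplane of $X$, and this is where I expect the genuine polar-space input to sit; the remainder is routine linear algebra inside $X$, $Y$ and $Z$.
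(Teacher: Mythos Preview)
Your argument is correct and follows essentially the same route as the paper: the paper's subspaces $X'$ and $Y'$ are precisely your $X\cap Y^{\perp}$ and $Y\cap X^{\perp}$, and the unique common neighbour is their span $Z_{0}$. You supply more detail than the paper on the uniqueness part (showing that any common neighbour $Z$ must satisfy $Z\cap X=X'$ and $Z\cap Y=Y'$), but the key idea is identical.
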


\begin{proof}
Let $X'$ be the singular subspace formed by all points of $X$ collinear to all points of $Y$.
Similarly, we denote by $Y'$ the singular subspace consisting of all points of $Y$
collinear to all points of $X$.
In the case (2), these subspaces both are $(k-1)$-dimensional and contain $X\cap Y$.
It is clear that $X'\perp Y'$.
The $k$-dimensional singular subspace spanned by $X'$ and $Y'$
is the unique vertex of $\Gamma_{k}(\Pi)$ adjacent to both $X$ and $Y$.
\end{proof}

\begin{lemma}\label{lemma-cases1-3}
If $A$ and $B$ are vertices of $\Gamma_{i}(V)$  satisfying $d(A,B)=2$
then for $X=f(A)$ and $Y=f(B)$ we have $(1)$ or $(3)$.
\end{lemma}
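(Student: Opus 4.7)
The plan is to exclude possibility (2) from the trichotomy above by a common-neighbor count; once (2) is ruled out, only (1) and (3) remain.

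First I would observe that since $d(A, B) = 2$ in $\Gamma_{i}(V)$, we have $\dim(A \cap B) = i - 2$ and $\dim(A + B) = i + 2$. The common neighbors of $A$ and $B$ in $\Gamma_{i}(V)$ correspond to the $2$-dimensional subspaces of the $4$-dimensional quotient $(A + B)/(A \cap B) = W_A \oplus W_B$ meeting both complementary $2$-dimensional subspaces $W_A = A/(A \cap B)$ and $W_B = B/(A \cap B)$ in $1$-dimensional subspaces; concretely, $C = (A \cap B) + \ell_A + \ell_B$ for any pair of $1$-dimensional $\ell_A \subset A$, $\ell_B \subset B$ with $\ell_A, \ell_B \not\subset A \cap B$, and distinct pairs give distinct $C$ (each $C$ recovers $\ell_A = C \cap A \bmod (A \cap B)$ and $\ell_B = C \cap B \bmod (A \cap B)$). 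Since over any division ring a $2$-dimensional space contains at least three $1$-dimensional subspaces, $A$ and $B$ admit at least two distinct common neighbors $C_1 \neq C_2$ in $\Gamma_{i}(V)$.

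Next, because every embedding preserves distances $1$ and $2$, we have $d(X, Y) = 2$ in $\Gamma_{k}(\Pi)$, so one of the three possibilities (1), (2), (3) listed above is realized. The images $f(C_1)$ and $f(C_2)$ are both adjacent in $\Gamma_{k}(\Pi)$ to each of $X$ and $Y$, and by injectivity of $f$ they are distinct. Hence $X$ and $Y$ have at least two distinct common neighbors in $\Gamma_{k}(\Pi)$. However, Lemma \ref{lemma-case2} asserts that in case (2) the pair $X, Y$ has a \emph{unique} common neighbor, contradicting the previous sentence. Therefore (1) or (3) must hold.

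No real obstacle arises here; the only care needed is to verify that at least two common neighbors exist in $\Gamma_{i}(V)$, which goes through uniformly over every division ring. The structural content is that a distance-$2$ pair in a Grassmann graph always enjoys a \emph{thick geodesic} property, whereas case (2) is precisely the \emph{thin geodesic} configuration singled out by Lemma \ref{lemma-case2}, so the two possibilities cannot coexist under an embedding.
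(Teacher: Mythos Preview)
Your proof is correct and follows essentially the same route as the paper: both arguments rule out case (2) by noting that $A$ and $B$ have more than one common neighbor in $\Gamma_{i}(V)$, whence $X=f(A)$ and $Y=f(B)$ inherit at least two common neighbors in $\Gamma_{k}(\Pi)$, contradicting Lemma~\ref{lemma-case2}. The paper simply asserts the existence of multiple common neighbors, whereas you spell out the parametrization by pairs of lines in the $4$-dimensional quotient $(A+B)/(A\cap B)$; this extra detail is sound (indeed every common neighbor $C$ satisfies $A\cap B\subset C\subset A+B$, so your description is in fact exhaustive), but it does not change the underlying strategy.
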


\begin{proof}
The embedding $f$ transfers every geodesic of $\Gamma_{i}(V)$
consisting of $3$ vertices to a geodesic of $\Gamma_{k}(\Pi)$.
Since $\Gamma_{i}(V)$ contains more than one vertex adjacent to both 
$A$ and $B$, Lemma \ref{lemma-case2} shows that 
the case (2) is impossible. 
\end{proof}

\subsection{Embeddings of type (A)}
Suppose that $f(A)\perp f(B)$ for any two vertices $A$ and $B$ of $\Gamma_{i}(V)$
satisfying $d(A,B)=2$.
In this case, we assert that $f$ is an embedding of type (A), 
i.e. there exists a singular subspace $U$ of $\Pi$ such that the image of $f$
is contained in $\langle U]_{k}$. 
This is equivalent to the fact that $f(A)\perp f(B)$ for any $A,B\in {\mathcal G}_{i}(V)$. 
We prove the latter statement by induction on $d=d(A,B)$.

The statement is obvious for $d=1$
and it follows from our assumption if $d=2$.
Let $d\ge 3$.
Then there exist $D,C_{1},C_{2}\in {\mathcal G}_{i}(V)$ such that 
$$d(A,D)=d-2,\;\;d(D,B)=d(C_{1},C_{2})=2$$
and every $C_{j}$ is adjacent to both $D$ and $B$.
Since $d(A,C_{j})=d-1$ for every $j=1,2$, we have 
$f(A)\perp f(C_{j})$ by the inductive hypothesis. 
Also, we have $f(C_{1})\perp f(C_{2})$ by our assumption.
Let $U$ be the singular subspace of $\Pi$ spanned by 
$$f(A),f(C_{1}), f(C_{2}).$$
Recall that $B$ is adjacent to both $C_{j}$.
This means that $f(B)$ and $f(C_{j})$ are adjacent vertices of $\Gamma_{k}(\Pi)$
and 
$$\dim (f(B)\cap f(C_{j}))=k-1$$
for every $j=1,2$.
We observe that
\begin{equation}\label{eq1}
f(B)\cap f(C_{1})\;\mbox{ and }\;f(B)\cap f(C_{2})
\end{equation}
are distinct $(k-1)$-dimensional subspaces of $f(B)$.
Indeed, if these subspaces are coincident then 
the intersection of $f(C_{1})$ and $f(C_{2})$ is $(k-1)$-dimensional
which is impossible, since $f(C_{1})\perp f(C_{2})$
and the distance between $f(C_{1})$ and $f(C_{2})$
in $\Gamma_{k}(V)$ is equal to $2$.
So, $f(B)$ is spanned by the subspaces \eqref{eq1}. 
These subspaces are contained in $U$ and we get the inclusion $f(B)\subset U$
which implies that $f(A)\perp f(B)$.  

\subsection{Embeddings of type (B)}
Suppose that there exist $A,B\in {\mathcal G}_{i}(V)$ such that 
$$d(A,B)=2\;\mbox{ and }\;f(A)\not\perp f(B).$$
By Lemma \ref{lemma-cases1-3}, the subspace
$$S:=f(A)\cap f(B)$$
is $(k-1)$-dimensional.
We show that the image of $f$ is contained in $[S\rangle_{k}$.

First, we establish this statement for $i=2,m-2$. 
Since the graphs $\Gamma_{2}(V)$ and $\Gamma_{m-2}(V^{*})$
are isomorphic, we can restrict ourself to the case $i=2$.

So, let $i=2$. 
Consider the $4$-dimensional subspace $M:=A+B$.
Our first step is to show that $f(\langle M]_{2})$ is contained in $[S\rangle_{k}$.

If $B$ is a base of $M$ then the associated {\it apartment} of ${\mathcal G}_{2}(M)$
is formed by all $2$-dimensional subspaces spanned by two distinct vectors from $B$.
Every apartment of ${\mathcal G}_{2}(M)$ consists of $6$ elements.
Two apartments of ${\mathcal G}_{2}(M)$ are {\it adjacent} if their intersection
contains precisely $4$ elements.

\begin{lemma}\label{lemma-ap}
For any two apartments ${\mathcal A},{\mathcal A}'\subset {\mathcal G}_{2}(M)$
there is a sequence of apartments 
$${\mathcal A}={\mathcal A}_{1},{\mathcal A}_{2},\dots, {\mathcal A}_{j}={\mathcal A}'$$
such that ${\mathcal A}_{t}$ and ${\mathcal A}_{t+1}$ are adjacent
for every $t\in \{1,\dots,j-1\}$.
\end{lemma}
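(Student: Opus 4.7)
The plan is to identify apartments with frames and reduce the lemma to a statement about generation in $GL_{4}(D)$ (where $D$ is the underlying division ring of $M$).

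First I would identify an apartment $\mathcal{A}_{B}$ with the unordered \emph{frame} $\{[e_{1}],[e_{2}],[e_{3}],[e_{4}]\}\subset \mathbb{P}(M)$, noting that $\mathcal{A}_{B}$ depends only on this 4-tuple of 1-dimensional subspaces (scaling each $e_{i}$ leaves the apartment unchanged). Next I would characterize adjacency: if $B'$ is obtained from $B$ by an elementary transvection, say replacing $e_{1}$ by $e_{1}'=\alpha e_{1}+\beta e_{2}$ with $\alpha,\beta\neq 0$, then a direct check shows that the apartments $\mathcal{A}_{B}$ and $\mathcal{A}_{B'}$ share precisely the four lines $\langle e_{1},e_{2}\rangle=\langle e_{1}',e_{2}\rangle$, $\langle e_{2},e_{3}\rangle$, $\langle e_{2},e_{4}\rangle$ and $\langle e_{3},e_{4}\rangle$; the remaining two lines $\langle e_{1}',e_{3}\rangle$ and $\langle e_{1}',e_{4}\rangle$ are new since $\alpha,\beta\neq 0$. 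Conversely, any adjacent apartment arises from $\mathcal{A}_{B}$ in this way, up to permuting indices.

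Given this, the lemma becomes the statement that any base $B=\{e_{1},\dots,e_{4}\}$ can be transformed into any other base $B'=\{f_{1},\dots,f_{4}\}$ by a sequence of elementary transvections, together with rescalings and reorderings of basis vectors (the latter two do not alter the frame, so they are ``free''). I would carry this out by a Gaussian-elimination style argument, replacing $e_{i}$ by $f_{i}$ one index at a time. To install $f_{1}$: write $f_{1}=\sum_{j}a_{j}e_{j}$, reindex so that $a_{1}\neq 0$, and perform up to three successive transvections $e_{1}\mapsto a_{1}e_{1}\mapsto a_{1}e_{1}+a_{2}e_{2}\mapsto a_{1}e_{1}+a_{2}e_{2}+a_{3}e_{3}\mapsto f_{1}$, each of which is an adjacency (the rescaling $e_{1}\mapsto a_{1}e_{1}$ does not change the apartment and can be absorbed into the first transvection). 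After the replacement the current base is $\{f_{1},e_{2},e_{3},e_{4}\}$, still a base since $a_{1}\neq 0$. Then one repeats the procedure to install $f_{2}$, then $f_{3}$, then $f_{4}$.

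The main obstacle is ensuring that every intermediate 4-tuple is actually a base (i.e.\ linearly independent), so that the adjacency criterion of Step 2 applies. When installing $f_{i}$ into a current base $\{f_{1},\dots,f_{i-1},e_{i},\dots,e_{4}\}$, the required nonzero coefficient may fail; in that case one first does one or two auxiliary transvections among $e_{i},\dots,e_{4}$ (which are free to modify since they will be overwritten later) to perturb the current base into one where the coefficient of $f_{i}$ along the pivot vector is nonzero. This is a routine but finicky case analysis; fundamentally it is the same bookkeeping that shows that elementary matrices together with the diagonal and permutation subgroups generate $GL_{4}(D)$ over any division ring.
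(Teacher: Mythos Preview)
Your argument is correct and self-contained, whereas the paper gives no proof at all here---it simply cites \cite[Proposition~5.1]{P-book2} and moves on. Your identification of apartments of ${\mathcal G}_{2}(M)$ with unordered frames in $\mathbb{P}(M)$ is the right viewpoint, and your check that an elementary replacement $e_{1}\mapsto \alpha e_{1}+\beta e_{2}$ (with $\alpha,\beta\ne 0$) yields exactly four common lines is accurate. The Gaussian-elimination reduction to the generation of $GL_{4}(D)$ by elementary matrices, diagonal matrices, and permutations is standard and works over any division ring, so it does exactly what is needed.

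Two minor remarks. First, the obstacle you raise in your last paragraph does not actually occur: when you install $f_{i}$ into the current base $\{f_{1},\dots,f_{i-1},e_{i},\dots,e_{4}\}$ and expand $f_{i}$ in that base, the linear independence of $f_{1},\dots,f_{i}$ forces at least one of the coefficients along $e_{i},\dots,e_{4}$ to be nonzero, so a reindexing among those last vectors (which, as you note, does not change the frame) always produces a valid pivot---no auxiliary transvections are needed, and every intermediate $4$-tuple is automatically a base. Second, the converse direction of your adjacency characterization (that every adjacent apartment arises from a single transvection) is not used in the argument and can be dropped.
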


\begin{proof}
This is a partial case of \cite[Proposition 5.1]{P-book2}.
\end{proof}

We take any apartment ${\mathcal A}\subset {\mathcal G}_{2}(M)$
containing both $A$ and $B$ (an easy verification shows that such an apartment exists).
Then every 
$C\in {\mathcal A}\setminus\{A,B\}$ is adjacent to both $A$ and $B$. 
This guarantees that $f(C)$ contains $S$, i.e.
$$f({\mathcal A})\subset [S\rangle_{k}.$$
Since every element of ${\mathcal G}_{2}(M)$ is contained in a certain apartment,
we need to show that the same inclusion holds for every apartment 
${\mathcal A}'\subset {\mathcal G}_{2}(M)$.
By Lemma \ref{lemma-ap}, it is sufficient to consider the case when
${\mathcal A}$ and ${\mathcal A}'$ are adjacent apartments.
Then $|{\mathcal A}\cap {\mathcal A}'|=4$ 
which implies the existence of $A',B'\in {\mathcal A}\cap {\mathcal A}'$ satisfying $d(A',B')=2$.
Since $A',B'\in {\mathcal A}$, 
we have $f(A'),f(B')\in[S\rangle_{k}$.
As above, every 
$C'\in {\mathcal A}'\setminus\{A',B'\}$ is adjacent to both $A',B'$
and we establish that $f(C')$ belongs to $[S\rangle_{k}$.

Therefore, $f(\langle M]_{2})$ is contained in $[S\rangle_{k}$.
If $m=4$ then $M=V$ and we get the claim.
In the case when $m>4$, we need to show that
$$f(\langle N]_{2})\subset [S\rangle_{k}$$
for every $4$-dimensional subspace $N\subset V$. 
Since the graph $\Gamma_{4}(V)$ is connected, 
it is sufficient to prove the latter inclusion only
for the case when $M$ and $N$ are adjacent vertices of $\Gamma_{4}(V)$.

Let $N$ be a vertex of $\Gamma_{4}(V)$ adjacent to $M$.
Then $T:=M\cap N$ is $3$-dimensional and 
$\langle T]_{2}$ is a top in the both Grassmannians 
${\mathcal G}_{2}(M)$ and ${\mathcal G}_{2}(N)$.
The image of this top is contained in $[S\rangle_{i}$.
It follows from Lemma \ref{lemma-line} that 
$f(\langle T]_{2})$ is a subset of a certain star $[S,Q]_{k}$.

Suppose that for all $A',B'\in \langle N]_{2}$ satisfying $d(A',B')=2$
we have $\noindent{f(A')\perp f(B')}$.
By Subsection 4.2, the restriction of $f$ to $\langle N]_{2}$ is an embedding of type (A).
We apply Lemma \ref{lemma-cliques} to this embedding.
Since $f(\langle T]_{2})$ is contained in a star, 
$f$ transfers every star of ${\mathcal G}_{2}(N)$ to a subset in a top.
Let $P$ be a $1$-dimensional subspace of $T$.
Then $[P,M]_{2}$ and $[P,N]_{2}$ are stars in ${\mathcal G}_{2}(M)$
and ${\mathcal G}_{2}(N)$, respectively.
Our embedding sends the first star to a subset in a certain star ${\mathcal S}=[S,K]_{k}$ 
and the second star goes to a subset in a top ${\mathcal T}$.
Every element of ${\mathcal S}\setminus {\mathcal T}$
is non-adjacent to every element of ${\mathcal T}\setminus {\mathcal S}$.
This means that $$f([P,M]_{2})\cup f([P,N]_{2})$$
is contained in ${\mathcal S}$ or ${\mathcal T}$.
Then $f([P,M]_{2})$ or $f([P,N]_{2})$ is a subset of the line  ${\mathcal S}\cap {\mathcal T}$
which is impossible.

Thus there exist $A',B'\in \langle N]_{2}$ such that $f(A')\not \perp f(B')$.
By the arguments given above, 
we have 
$$f(\langle N]_{2})\subset [S'\rangle_{k}$$
for a certain $S'\in {\mathcal G}_{k-1}(\Pi)$.
Then $f(\langle T]_{2})$ is contained in both $[S\rangle_{k}$ and $[S'\rangle_{k}$,
i.e.
the intersection of $[S\rangle_{i}$ and $[S'\rangle_{i}$ contains more than one element.
If $S$ and $S'$ are distinct then this intersection contains at most one element.
Hence $S=S'$.

So, $f$ is an embedding of type (B) if $i=2,m-2$.
In the case when $2<i<m-2$, the subspace $M:=A+B$ is $(i+2)$-dimensional.
Therefore, the restriction of $f$ to $\langle M]_{i}$ is an embedding of type (B)
whose image is contained in $[S\rangle_{k}$.
Let us take any $N\in {\mathcal G}_{i+2}(V)$ adjacent to $M$ in $\Gamma_{i+2}(V)$.
As for the case $i=2$, we show that 
the restriction of $f$ to $\langle N]_{i}$ is an embedding of type (B)
and its image is contained in $[S\rangle_{k}$.
Since the graph $\Gamma_{i+2}(V)$ is connected,
the same holds for every $N\in {\mathcal G}_{i+2}(V)$.

\section{Embeddings of dual polar graphs in polar Grassmann graphs 
formed by non-maximal singular subspaces}

Using some arguments from the previous section,
we prove the following.

\begin{theorem}\label{theorem2}
Let $\Pi'=(P',{\mathcal L}')$ be a polar space of rank $l\ge 3$.
If $f$ is a $3$-embedding of the dual polar graph $\Gamma_{l-1}(\Pi')$
in the polar Grassmann graph $\Gamma_{k}(\Pi)$, $k\le n-2$
then $k\le n-4$ and there is a singular subspace $U$ of $\Pi$
such that the image of $f$ is contained in $\langle U]_{k}$.
\end{theorem}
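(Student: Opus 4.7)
The plan is to follow the architecture of the proof of Theorem~\ref{theorem-main}, using the $3$-embedding hypothesis to eliminate the analog of type~(B).

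First, as in Subsection~4.1, the embedding $f$ carries every maximal clique of $\Gamma_{l-1}(\Pi')$---each line $[T\rangle_{l-1}$ with $T\in\mathcal{G}_{l-2}(\Pi')$---to a subset of a maximal clique of $\Gamma_k(\Pi)$, and distinct maximal cliques of the source map into distinct maximal cliques of the target. For any pair $A,B\in\mathcal{G}_{l-1}(\Pi')$ with $d(A,B)=2$, Lemma~\ref{lemma-dist} (applied to $k\le n-2$) leaves three cases for $(f(A),f(B))$. Since $\Pi'$ has rank $\ge 3$, the quad $[A\cap B\rangle_{l-1}$ is a rank-$2$ polar subspace of $\mathfrak{G}_{l-1}(\Pi')$ and always produces more than one common neighbor of $A,B$; Lemma~\ref{lemma-case2} then rules out case~(2), leaving case~(1) (i.e.\ $f(A)\perp f(B)$) and case~(3) (i.e.\ $\dim(f(A)\cap f(B))=k-1$ with $f(A)\not\perp f(B)$).

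The main obstacle is to exclude case~(3) entirely. Suppose, toward a contradiction, that case~(3) occurs for some distance-$2$ pair $A,B$ and put $S:=f(A)\cap f(B)$, a $(k-1)$-dimensional singular subspace with $f(A),f(B)\in[S\rangle_k$. I would adapt the apartment argument of Subsection~4.3 to the dual polar setting: choose an apartment $\mathcal{A}$ of $\Gamma_{l-1}(\Pi')$ coming from a frame of $\Pi'$ (it is the hypercube graph $Q_l$ on $2^l$ vertices, of diameter $l$) that contains both $A$ and $B$. The plan is to propagate the containment $f(v)\in[S\rangle_k$ from $\{A,B\}$ throughout $\mathcal{A}$: for each new vertex $v$, the geodesic $f(X)-f(v)-f(Y)$ through a pair $X,Y$ already placed in $[S\rangle_k$ should force $f(v)\in[S\rangle_k$ by the case-$(3)$ analysis. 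Since $l\ge 3$, the apartment then contains an antipodal pair at distance $l\ge 3$ whose images lie in $[S\rangle_k$; but $[S\rangle_k$, identified with the collinearity graph of the polar space $\Pi_S$, has diameter at most $2$ in $\Gamma_k(\Pi)$, contradicting preservation of distance~$3$. The technically delicate point is the propagation step: in case~(3) the common neighbors of $f(A),f(B)$ split into those containing $S$ and those not, and one must show (by a careful combinatorial argument exploiting the multiplicity of geodesics in the dual polar quad) that every image of an apartment vertex lies in the former family.

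Once case~(3) is excluded, $f(A)\perp f(B)$ holds for every distance-$2$ pair, and the inductive chain argument of Subsection~4.2 applies essentially verbatim. Its only additional ingredient is the routine existence, for any $A,B$ at distance $d\ge 3$, of a vertex $D$ with $d(A,D)=d-2$ and $d(D,B)=2$ and of two common neighbors $C_1,C_2$ of $D,B$ with $d(C_1,C_2)=2$, which is easy in dual polar graphs of rank $\ge 3$. It follows that $f(A)\perp f(B)$ for all $A,B\in\mathcal{G}_{l-1}(\Pi')$, so $\mathrm{im}(f)$ spans a singular subspace $U$ of $\Pi$ and is contained in $\langle U]_k$.

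Finally, to derive $k\le n-4$, I use that the restriction of $\Gamma_k(\Pi)$ to $\langle U]_k$ is naturally the Grassmann graph $\Gamma_{k+1}(W)$, where $W$ is the vector space underlying the projective space $U$, so $\dim W=\dim U+1\le n$. The diameter of $\Gamma_{k+1}(W)$ equals $\min(k+1,\dim W-k-1)$. Because $f$ is a $3$-embedding of a graph of diameter $l\ge 3$, this Grassmann graph must contain a pair at distance $3$, forcing $\min(k+1,\dim W-k-1)\ge 3$. This yields $k\ge 2$ and $\dim W\ge k+4$, so combined with $\dim W\le n$ we conclude $k\le n-4$, completing the proof.
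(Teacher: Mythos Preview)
Your overall architecture matches the paper's, and the treatment of case~(2), the inductive $\perp$-propagation of Subsection~4.2, and the derivation of $k\le n-4$ are all correct and essentially the same as in the paper.

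The genuine gap is in your elimination of case~(3). You propose to push the containment $f(v)\in[S\rangle_k$ through a hypercube apartment $Q_l$, but you yourself flag the propagation as ``technically delicate'' without carrying it out. The difficulty is real: from the initial pair $A,B$ at distance~$2$ you obtain only the four vertices $A,B,C_1,C_2$ of a single quad, and every distance-$2$ pair among these has the same two common neighbours, so you cannot leave that quad by the mechanism you describe. Reaching a vertex differing from $A$ in a new coordinate would require a distance-$2$ pair one of whose members is not yet known to lie in $[S\rangle_k$, so the induction stalls. (It is true that in case~(3) every common neighbour of $f(A),f(B)$ contains $S$ --- the set of points of $f(A)$ collinear to all of $f(B)$ is a hyperplane, hence equals $S$ --- so your worry about ``two families'' is unfounded; but this does not rescue the global propagation.)

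The paper avoids this entirely with a short local lemma you overlooked: in case~(3), \emph{no} geodesic $X$--$Z$--$Y$ in $\Gamma_k(\Pi)$ extends to a geodesic on four vertices (Lemma~\ref{lemma-case3}). The argument is elementary: if $d(X,Z')=3$ with $Z'$ adjacent to $Y$, then $X\cap Z'$ is forced to be $(k-2)$-dimensional and there is a point of $X\cap Y\setminus Z'$ collinear with all of $Z'$, so Lemma~\ref{lemma-dist} gives $d(X,Z')=2$, a contradiction. Since $\Gamma_{l-1}(\Pi')$ has diameter $\ge 3$, every distance-$2$ geodesic in the source extends to four vertices, and a $3$-embedding sends this to a $4$-vertex geodesic in $\Gamma_k(\Pi)$; case~(3) is therefore impossible for every distance-$2$ pair. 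This is precisely where the $3$-embedding hypothesis does its work, and it replaces your apartment argument with two lines.
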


\begin{rem}{\rm
If a polar space of rank $l$ is related to a certain form 
(sesquilinear, quadratic or pseudo-quadric) defined on a vector space $W$
then the associated dual polar graph is naturally contained in
the Grassmann graph $\Gamma_{l}(W)$.
}\end{rem}

Since $k\le n-2$, for any two vertices $X$ and $Y$ of $\Gamma_{k}(\Pi)$
satisfying $d(X,Y)=2$ one of the following possibilities is realized:
\begin{enumerate}
\item[(1)] $X\cap Y$ is $(k-2)$-dimensional and $X\perp Y$;
\item[(2)] $X\cap Y$ is $(k-2)$-dimensional and $X\not\perp Y$,
but there exist points $p\in X\setminus Y$ and $q\in Y\setminus X$ 
such that $p\perp Y$ and $q\perp X$;
\item[(3)] $X\cap Y$ is $(k-1)$-dimensional and $X\not\perp Y$.
\end{enumerate}
Lemma \ref{lemma-case2} states that in the case (2)
there is the unique vertex of $\Gamma_{k}(\Pi)$ adjacent to both $X$ and $Y$.
In addition to this fact, we will need the following statement concerning 
the case (3).

\begin{lemma}\label{lemma-case3}
In the case {\rm (3)}, every geodesic of $\Gamma_{k}(\Pi)$ 
between $X$ and $Y$ cannot be extended to a geodesic containing more than three vertices.
\end{lemma}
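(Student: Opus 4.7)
The plan is to prove that every vertex $W$ adjacent to $Y$ in $\Gamma_k(\Pi)$ satisfies $d(X,W) \le 2$. By the symmetry that swaps $X$ and $Y$ in the case~(3) configuration, the analogous statement then also holds for every $W'$ adjacent to $X$, and these two facts together prevent the geodesic $X, Z, Y$ from sitting as a proper sub-geodesic of any longer geodesic.

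Set $S := X \cap Y$ (of dimension $k-1$, with $X \not\perp Y$) and $T := W \cap Y$ (of dimension $k-1$, because $W \sim Y$). I split into two cases. If $T = S$, then $W \supset S \subset X$ gives $\dim(X \cap W) \ge k-1$; since $W \sim Y$ while $X \not\sim Y$ forces $X \ne W$, we have $\dim(X \cap W) = k-1$, and Lemma~\ref{lemma-dist} yields $d(X,W) \le 2$.

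Assume now $T \ne S$, so $S$ and $T$ are distinct $(k-1)$-dimensional subspaces of the $k$-dimensional singular subspace $Y$; they intersect in dimension $k-2$ and span $Y$. The key step is to exploit that, in the polar Grassmann graph, adjacency requires the sum to be a singular subspace. Writing $W = T + \langle w \rangle$ with $w \in W \setminus Y$, the singularity of the $(k+1)$-dimensional subspace $W + Y$ forces $w \perp Y$, and in particular $w \perp S$. Since $S \cap T \subset X \cap W$ trivially, while $X \cap W \cap Y = X \cap T \subset X \cap Y = S$ forces $X \cap W \cap Y \subset S \cap T$, we obtain $X \cap W \cap Y = S \cap T$, and therefore $\dim(X \cap W) \in \{k-2, k-1\}$. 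If $\dim(X \cap W) = k-1$, Lemma~\ref{lemma-dist} again produces $d(X,W) \le 2$.

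In the remaining subcase $\dim(X \cap W) = k-2$, I construct a special point witnessing $d(X,W)=2$: any $p \in S \setminus T$ lies in $X$ (because $S \subset X$) and outside $W$ (otherwise $p \in X \cap W = S \cap T \subset T$), and it satisfies $p \perp W$ because $p \perp T$ (both are contained in the singular subspace $Y$) and $p \perp w$ (from $w \perp S$ and $p \in S$). Since $\dim(X \cap W) = k-2$ constrains $d(X,W)$ to lie in $\{2, 3\}$, and the second option of Lemma~\ref{lemma-dist} for $d = 3$ would preclude the existence of such a $p$, we conclude $d(X,W) = 2$. The main obstacle is to notice that one cannot rely on dimension counts alone: without the orthogonality $w \perp Y$ supplied by $W+Y$ being singular, Case~2 would admit genuine configurations at distance~$3$; it is precisely this polar-geometric constraint, absent from the ordinary Grassmann graph setting, that forces the special point $p$ into existence.
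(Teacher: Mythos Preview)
Your proof is correct and follows essentially the same route as the paper: both arguments show that any $W$ adjacent to $Y$ must satisfy $d(X,W)\le 2$ by exhibiting a point $p\in (X\cap Y)\setminus W$ with $p\perp W$ and then invoking Lemma~\ref{lemma-dist} to rule out $d(X,W)=3$. Your version is slightly more explicit (you separate the case $T=S$, mention the symmetry in $X,Y$, and derive $p\perp W$ via an auxiliary point $w$ rather than directly from the singularity of $W+Y$), but the underlying idea is identical to the paper's.
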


\begin{proof}
Suppose that $Z$ is a vertex of $\Gamma_{k}(\Pi)$ adjacent to $Y$
and such that the distance between $X$ and $Z$ in $\Gamma_{k}(\Pi)$ 
is equal to $3$.
Then $Z\cap Y$ is $(k-1)$-dimensional and
it follows from Lemma \ref{lemma-dist} that $Z\cap Y$ and $X\cap Y$ 
are distinct $(k-1)$-dimensional subspaces
(otherwise, the subspace $X\cap Z$ is $(k-1)$-dimensional 
which contradicts the fact that $d(X,Z)=3$).
Thus the intersection of $Z\cap Y$ and $X\cap Y$ is $(k-2)$-dimensional
which guarantees that $X\cap Z$ is $(k-2)$-dimensional.
Also, there is a point $p\in X\cap Y$ which does not belong to $Z$.
Since $p\in Y$ and $Y,Z$ are adjacent vertices of $\Gamma_{k}(\Pi)$,
we have $p\perp Z$.
So,
$$\dim (X\cap Z)=k-2$$
and $p$ is a point of $X\setminus Z$ collinear to all points of $Z$.
By Lemma \ref{lemma-dist},
the distance between $X$ and $Z$ in $\Gamma_{k}(\Pi)$ 
is equal to $2$ which contradicts our assumption.
\end{proof}

If $l\ge 3$ then the diameter of $\Gamma_{l-1}(\Pi')$ is not less than $3$
and for every $3$-embedding $f$ of $\Gamma_{l-1}(\Pi')$ in $\Gamma_{k}(\Pi)$
there is the following analogue of Lemma \ref{lemma-cases1-3}.

\begin{lemma}\label{lemma-case1}
If $A$ and $B$ are vertices of $\Gamma_{l-1}(\Pi')$  satisfying $d(A,B)=2$
then for $X=f(A)$ and $Y=f(B)$ we have $(1)$.
\end{lemma}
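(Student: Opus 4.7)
The plan is to rule out possibilities (2) and (3) for the distance-$2$ pair $X=f(A)$, $Y=f(B)$ in $\Gamma_k(\Pi)$, so that only (1) remains. Set $T:=A\cap B$, which has dimension $l-3$.

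For possibility (2), I would construct at least two common neighbors of $A,B$ in $\Gamma_{l-1}(\Pi')$; by injectivity of $f$ these give at least two common neighbors of $X,Y$ in $\Gamma_k(\Pi)$, contradicting Lemma \ref{lemma-case2}. For each hyperplane $F$ of $A$ containing $T$ (there are at least three such $F$, parametrised by the projective line $A/T$), the pairing $B\times F/T\to k$ induced by the form is non-zero since $F\not\subset B=B^\perp$; hence $B\cap F^\perp$ has codimension one in $B$ and strictly contains $T$. For any $p\in (B\cap F^\perp)\setminus T$, the subspace $C(F):=F+\langle p\rangle$ is maximal singular, adjacent to $A$ (since $F\subset C(F)$) and to $B$ (since $T+\langle p\rangle\subset C(F)\cap B$), and distinct $F$ give distinct $C(F)$ because $C(F)\cap A=F$.

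For possibility (3), I would extend a length-$2$ geodesic $A\sim C\sim B$ in $\Gamma_{l-1}(\Pi')$ to a length-$3$ geodesic $A\sim C\sim B\sim D$; the $3$-embedding property then places a length-$3$ geodesic in $\Gamma_k(\Pi)$ extending $X\sim f(C)\sim Y$, directly contradicting Lemma \ref{lemma-case3} in case (3). Choose a hyperplane $F$ of $B$ not containing $T$, so that $F\cap A=F\cap T$ has dimension $l-4$. The core of the argument is the identity $A\cap F^\perp=T$: one inclusion is immediate, and two elements of $A\cap F^\perp$ linearly independent modulo $F$ would, being points of the singular subspace $A$ perpendicular to $F$, span together with $F$ a singular subspace of dimension $l$, contradicting the rank of $\Pi'$ (the same type of singular-dimension argument used in Subsection 4.2). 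It follows that the only maximal extension $D=F+\langle p\rangle$ with $p\in A+F$ is $B$ itself, and by thickness of $\Pi'$ some extension $D\ne B$ exists; for this $D$ we have $A\cap D=A\cap F$, hence $d(A,D)=3$, while $d(B,D)=1$ by construction.

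The main obstacle is establishing the dimensional identity $A\cap F^\perp=T$ in case (3), which rests on the singular-dimension bound; once this is in place, the distance and adjacency bookkeeping is routine, and the analogous (slightly simpler) computation handles case (2).
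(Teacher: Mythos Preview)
Your approach is essentially the paper's: eliminate case~(2) by exhibiting at least two common neighbours of $A,B$ in $\Gamma_{l-1}(\Pi')$ and invoking Lemma~\ref{lemma-case2}, and eliminate case~(3) by extending a geodesic $A,C,B$ to a geodesic $A,C,B,D$ of length~$3$, pushing it forward via the $3$-embedding hypothesis, and invoking Lemma~\ref{lemma-case3}. The paper is much terser---it simply asserts both dual-polar-graph facts (multiple common neighbours; every $2$-geodesic extends) as known and moves on---whereas you supply explicit constructions; one small caveat is that your phrase ``the pairing $B\times F/T\to k$ induced by the form'' presupposes $\Pi'$ arises from a sesquilinear form, which the paper does not assume, so that step should be rephrased in the abstract collinearity language (your case~(3) argument already does this correctly via the $\perp$ relation).
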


\begin{proof}
There is more than one vertex of $\Gamma_{l-1}(\Pi')$
adjacent to both $A$ and $B$. 
As in the proof of Lemma \ref{lemma-cases1-3},
we establish that (2) is impossible.
The diameter of $\Gamma_{l-1}(\Pi')$ is not less than $3$
and every geodesic between $A$ and $B$ can be extended to a geodesic
containing $4$ vertices. Since $f$ is a $3$-embedding, it transfers 
this geodesic to a geodesic of $\Gamma_{k}(\Pi)$.
Lemma \ref{lemma-case3} shows that (3) fails.
\end{proof}

Let $A$ and $B$ be vertices of $\Gamma_{l-1}(\Pi')$.
It is clear that $f(A)\perp f(B)$ if these vertices are adjacent.
By Lemma \ref{lemma-case1}, the same holds in the case when $d(A,B)=2$. 
If $d(A,B)\ge 3$ then there exist $D,C_{1},C_{2}\in {\mathcal G}_{l-1}(\Pi')$ such that 
$$d(A,D)=d(A,B)-2,\;\;d(D,B)=d(C_{1},C_{2})=2$$
and every $C_{j}$ is adjacent to both $D$ and $B$.
As in Subsection 4.2, we show that $f(A)\perp f(B)$ by induction.
So, there is a singular subspace $U$ of $\Pi$
such that the image of $f$ is contained in $\langle U]_{k}$. 
Since $f$ is a $3$-embedding, 
the diameter of $\langle U]_{k}$ in $\Gamma_{k}(\Pi)$ is not less than $3$.
The latter implies that $k\le n-4$.

\begin{rem}{\rm
The direct analogue of Theorem \ref{theorem2} holds for 
every graph $\Gamma=({\mathcal V},\sim)$ satisfying the following 
conditions:
\begin{enumerate}
\item[$\bullet$] if $v,w\in {\mathcal V}$ and $d(v,w)=2$ then
there is more than one vertex of $\Gamma$ adjacent to both $v,w$
and every geodesic between $v$ and $w$ can be extended to 
a geodesic containing $4$ vertices,
\item[$\bullet$] if $v,w\in {\mathcal V}$ and $d(v,w)\ge 3$ then
there exist $u,c_{1},c_{2}\in {\mathcal V}$
such that 
$$d(v,u)=d(v,w)-2,\;\;d(u,w)=d(c_{1},c_{2})=2$$
and every $c_{j}$ is adjacent to both $u$ and $w$.
\end{enumerate}
}\end{rem}

\end{document}